\def\Ex{{\mathbb E}}
\def\Pr{{\mathbb P}}
\def\er{{\mathbb R}}
\def\ind{{\mathbf 1}}
\newtheorem{lem}{Lemma}
\newtheorem{conj}[lem]{Conjecture}
\newtheorem{thm}[lem]{Theorem}
\newtheorem{prop}[lem]{Proposition}
\theoremstyle{remark}
\newtheorem{rem}[lem]{Remark}
\theoremstyle{definition}
\title{Weak and strong moments of $\ell_r$-norms of log-concave vectors
\thanks{Research supported by the NCN grant DEC-2012/05/B/ST1/00412.}}
\author{Rafa{\l} Lata{\l}a and Marta Strzelecka}
\date{revised version}
\begin{document}

\maketitle

\begin{abstract}
We show that  for $p\geq 1$ and $r\geq 1$  the $p$-th moment of the $\ell_r$-norm of a log-concave random vector is
comparable to the sum of the first moment and the weak $p$-th moment up to a constant proportional to $r$. This extends
the previous result of Paouris concerning Euclidean norms. 
\end{abstract}

\section{Introduction and Main Results}

A measure $\mu$ on a locally convex linear space $F$ is called logarithmically concave (log-concave in short) 
if for any compact nonempty sets $K,L\subset F$ and $\lambda\in [0,1]$, 
$\mu(\lambda K+(1-\lambda)L)\geq \mu(K)^{\lambda}\mu(L)^{1-\lambda}$.
A random vector with values in $F$ is called log-concave if its distribution is logarithmically concave.
The class of log-concave measures is closed under linear transformations, convolutions and weak limits. 
By the result of Borell \cite{Bo} a $d$-dimensional vector with a full dimensional support is log-concave
iff it has a log-concave density, i.e. a density of the form $e^{-h}$, where $h$ is a convex function with values
in $(-\infty,\infty]$. A typical example of a log-concave vector is a vector uniformly distributed over a convex body.
Various results and conjectures about log-concave measures are discussed in the recently published monograph \cite{BGVV}.

One of the fundamental properties of log-concave vectors is the Paouris inequality \cite{Pa} 
(see also \cite{ALLOPT} for a shorter proof). It states that for a~log-concave vector
$X$ in $\er^n$,
\begin{equation}
\label{eq:Paour}
(\Ex\|X\|_2^{p})^{1/p}\leq 
C_1\left((\Ex \|X\|_2^2)^{1/2}+\sigma_X(p)\right) \quad \mbox{ for }p\geq 1, 
\end{equation}
where 
\[
\sigma_{X}(p):=\sup_{\|t\|_2\leq 1}\left(\Ex\left|\sum_{i=1}^nt_iX_i\right|^p\right)^{1/p}.
\]
Here and in the sequel by $C_1,C_2,\ldots$ we denote absolute constants.

It is natural to ask whether inequality \eqref{eq:Paour} may be generalized to non-Euclidean norms.
In \cite{La2} the following conjecture was formulated and discussed.

\begin{conj}
\label{conj:ws}
There exists a universal constant $C$ such that for any log-concave vector
$X$ with values in a finite dimensional normed space $(F,\|\ \|)$,  
\[
(\Ex\|X\|^p)^{1/p}\leq C\left(\Ex\|X\|+\sup_{\varphi\in F^*,\|\varphi\|_*\leq 1}(\Ex|\varphi(X)|^p)^{1/p}\right)
\quad \mbox{ for }p\geq 1.
\] 
\end{conj}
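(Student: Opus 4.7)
The plan is to reduce the general conjecture to a bound on the supremum of a subexponential process indexed by the unit ball of the dual space, and then attempt a chaining argument in the spirit of the proof of \eqref{eq:Paour} given in \cite{ALLOPT}. Writing $\|x\|=\sup_{\varphi\in T}\varphi(x)$ with $T=\{\varphi\in F^*:\|\varphi\|_*\leq 1\}$, the quantities appearing in the conjecture become
\[
\Ex\|X\|^p=\Ex\sup_{\varphi\in T}\varphi(X)^p,\qquad \sup_{\|\varphi\|_*\leq 1}(\Ex|\varphi(X)|^p)^{1/p}=\sup_{\varphi\in T}\|\varphi(X)\|_p,
\]
and the task is to control the $p$-th moment of $\sup_{\varphi\in T}\varphi(X)$ by its first moment plus its pointwise $p$-th moment, uniformly over all symmetric convex sets $T$.

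Each marginal $\varphi(X)$ is a one-dimensional log-concave variable, hence Borell's lemma yields $\|\varphi(X)\|_p\leq Cp\|\varphi(X)\|_1$ for every $p\geq 1$, and applied to differences it gives the $\psi_1$-type bound
\[
\|\varphi(X)-\psi(X)\|_p\leq Cp\,d(\varphi,\psi),\qquad d(\varphi,\psi):=\|\varphi(X)-\psi(X)\|_1,
\]
exhibiting $\{\varphi(X)\}_{\varphi\in T}$ as a subexponential process on $(T,d)$. I would then invoke Talagrand's generic chaining upper bound,
\[
\Bigl\|\sup_{\varphi\in T}(\varphi(X)-\varphi_0(X))\Bigr\|_p\leq C\bigl(\gamma_1(T,d)+p\cdot\mathrm{diam}_d(T)\bigr),
\]
and observe that the diameter term is dominated by $C\sigma_X(p)$ by the $\psi_1$ bound above, while a suitable choice of the reference point $\varphi_0\in T$ contributes at most $\Ex\|X\|+\sigma_X(p)$.

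What remains, and constitutes the genuine obstacle, is the comparison $\gamma_1(T,d)\leq C\,\Ex\sup_{\varphi\in T}\varphi(X)$ with a \emph{universal} constant. This amounts to a chaining lower bound for $L^1$-bounded log-concave processes, an analogue in the $\psi_1$ category of Talagrand's majorizing measure theorem for Gaussian processes. No such comparison is currently available for an arbitrary symmetric convex set $T$, which is precisely why Conjecture~\ref{conj:ws} is open in full generality. The route pursued in this paper sidesteps the obstruction in the $\ell_r$ setting, where the dual set $T=B_{r'}$ (with $1/r+1/r'=1$) admits an explicit chaining whose cost grows like $r$; producing a truly universal constant for every finite-dimensional norm appears to demand a new structural theorem for suprema of log-concave processes that is not yet known.
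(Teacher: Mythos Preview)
The statement you are addressing is Conjecture~\ref{conj:ws}, which the paper explicitly leaves open; there is no proof in the paper to compare against. You correctly recognise this and present not a proof but an outline of a possible strategy together with its obstruction, which is an appropriate response.

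Two comments on the outline itself. First, your handling of the diameter term is not quite right: the $\psi_1$ bound $\|\varphi(X)-\psi(X)\|_p\leq Cp\,d(\varphi,\psi)$ gives $\sigma_X(p)\leq Cp\cdot\mathrm{diam}_d(T)$, which is the reverse of what you need. For a bounded log-concave vector (say $X$ uniform on $[-1,1]^n$) the quantity $p\cdot\mathrm{diam}_d(T)$ grows linearly in $p$ while $\sigma_X(p)$ stays bounded, so $p\cdot\mathrm{diam}_d(T)$ is not dominated by $C\sigma_X(p)$ in general. One can repair this with a more refined chaining conclusion that replaces $p\cdot\mathrm{diam}_d(T)$ by $\sup_{\varphi,\psi\in T}\|\varphi(X)-\psi(X)\|_p$ directly, but it is worth stating correctly. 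Second, your identification of the core difficulty, namely a universal bound $\gamma_1(T,d)\leq C\,\Ex\sup_{\varphi\in T}\varphi(X)$ for log-concave processes, is accurate and is indeed the $\psi_1$ analogue of the majorizing measure theorem; no such result is known.

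It is also worth noting that the paper's route to the $\ell_r$ case does \emph{not} proceed by chaining over the dual ball $B_{r'}$. Instead it goes through a truncation estimate (Theorem~\ref{thm:cutPaourr}) that controls $\big(\Ex(\sum_i |X_i|^r\ind_{\{|X_i|\geq t d_i\}})^{p/r}\big)^{1/p}$ directly, via Paouris' inequality applied to auxiliary conditioned vectors and a combinatorial decomposition of the index set. The factor $r$ in Theorem~\ref{thm:sublr} arises from moment comparisons of the form $\|Y_i\|_r\leq Cr\|Y_i\|_2$ inside that argument, not from the cost of a chaining tree.
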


Our main result states that the conjecture holds for spaces that may be embedded in $\ell_r$ for some $r\geq 1$.

\begin{thm}
\label{thm:sublr}
Let $X$ be a log-concave vector with values in a normed space $(F,\|\ \|)$ which may be isometrically
embedded in $\ell_r$ for some $r\in [1,\infty)$. Then for $p\geq 1$,
\[
(\Ex\|X\|^p)^{1/p}\leq C_2r\left(\Ex\|X\|+\sup_{\varphi\in F^*,\|\varphi\|_*\leq 1}(\Ex|\varphi(X)|^p)^{1/p}\right).
\] 
\end{thm}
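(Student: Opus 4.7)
By the isometric embedding hypothesis, I may assume $X$ takes values in $\ell_r$. A standard truncation argument---projecting onto the first $n$ coordinates and letting $n\to\infty$---reduces the problem to $F=\ell_r^n$ with bounds uniform in $n$. Set $M:=\Ex\|X\|$ and $S:=\sup_{\|t\|_{r'}\leq 1}(\Ex|\sum_i t_i X_i|^p)^{1/p}$, where $r'$ is the conjugate exponent.

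\textbf{The easy regime $p\leq r$.} As a seminorm of a log-concave vector, $\|X\|$ is itself a one-dimensional log-concave random variable, so Borell's lemma yields $(\Ex\|X\|^q)^{1/q}\leq Cq\,M$ for every $q\geq 1$. Since $p\leq r$ and $L^q$-norms are monotone in $q$,
\[
(\Ex\|X\|^p)^{1/p}\leq(\Ex\|X\|^r)^{1/r}\leq Cr\,M,
\]
which already implies the target bound; $S$ plays no role in this range.

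\textbf{The main regime $p>r$.} Here I plan to apply Paouris's Euclidean inequality \eqref{eq:Paour} to a randomly rescaled copy of $X$. I would introduce iid nonnegative weights $(\theta_i)$ independent of $X$---positive $r/2$-stable for $r\in[1,2]$, a Gamma-type law for $r>2$, truncated as needed so that all relevant moments exist---whose distribution satisfies
\[
\|x\|_r^p\;\asymp\;C^p r^p\,\Ex_\theta\Bigl(\textstyle\sum_i \theta_i x_i^2\Bigr)^{p/2}.
\]
Conditionally on $\theta$ the vector $D_\theta X:=(\theta_i^{1/2}X_i)_i$ is log-concave, so \eqref{eq:Paour} gives $(\Ex_X\|D_\theta X\|_2^p)^{1/p}\leq C_1\bigl((\Ex_X\|D_\theta X\|_2^2)^{1/2}+\sigma_{D_\theta X}(p)\bigr)$. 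Taking $p$-th powers and integrating in $\theta$, the first term should contribute at most $(CrM)^p$ and the second at most $(CrS)^p$, yielding the claim.

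\textbf{Main obstacle.} The crux is the last step, and in particular controlling $\Ex_\theta\,\sigma_{D_\theta X}(p)^p$ by $(CrS)^p$. The quantity $\sigma_{D_\theta X}(p)=\sup_{\|u\|_2\leq 1}(\Ex_X|\sum_i u_i\theta_i^{1/2}X_i|^p)^{1/p}$ is a supremum over the $\ell_2$-ball in $u$, while $S$ is a supremum over the $\ell_{r'}$-ball; one must therefore show that, on average over $\theta$, the reweighting $u\mapsto(u_i\theta_i^{1/2})_i$ sends $B_{\ell_2}$ into an $O(r)$-dilate of $B_{\ell_{r'}}$ in the relevant $L^p(X)$ sense. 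Keeping the dependence on $r$ linear (rather than quadratic) is the core technical challenge and likely requires careful use of the stable/Gamma tail structure of the chosen weights, possibly with separate arguments in the ranges $r\in[1,2]$ and $r>2$.
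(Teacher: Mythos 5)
Your reduction to $\ell_r^n$ and your treatment of the range $p\le r$ are fine and coincide with the paper's first steps (the paper likewise disposes of small $p$ via $(\Ex\|X\|_r^p)^{1/p}\le C_5p\,\Ex\|X\|_r$). The problem is the main regime $p>r$: what you give there is a program, not a proof, and the obstacle you name at the end is not a removable technicality --- it is where the entire difficulty of the theorem sits, and the specific route you propose breaks down before you even reach it. First, for $r>2$ the representation $\|x\|_r^p\asymp C^pr^p\,\Ex_\theta\bigl(\sum_i\theta_i x_i^2\bigr)^{p/2}$ with i.i.d.\ weights of a fixed (dimension-free) law does not exist: testing $x=e_1$ normalizes $(\Ex\theta_1^{p/2})^{2/p}\asymp r^{-2}$, while testing $x=n^{-1/r}(1,\dots,1)$ and using Jensen (note $p/2\ge1$) gives $(\Ex(\sum_{i\le n}\theta_i)^{p/2})^{2/p}\ge n\,\Ex\theta_1$, which would have to be $\asymp r^{-2}n^{2/r}\ll n$; letting $n\to\infty$ forces $\Ex\theta_1=0$, a contradiction. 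Second, for $1\le r<2$ the positive $(r/2)$-stable weights satisfy $\Ex\theta_1^{s}=\infty$ for $s\ge r/2$, so $\Ex_\theta(\sum_i\theta_i x_i^2)^{p/2}$ diverges exactly in the range $p\ge r$ you need; after any $p$-dependent truncation, the term $\Ex_\theta\,\sigma_{D_\theta X}(p)^p$ still sees $\max_{i\le n}\theta_i$, which for $(r/2)$-stable tails is of order $n^{2/r}$ (take $u=e_{i_0}$ with $i_0$ maximizing $\theta_i$ to lower-bound the supremum over the Euclidean ball), so a bound by $(CrS)^p$ with constants independent of $n$ is precisely the open point. In short, the argument is incomplete at its crux, and the proposed device appears structurally unable to close it.

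For comparison, the paper uses no randomized-Euclidean representation of the $\ell_r$-norm. It proves the truncated moment bound of Theorem \ref{thm:cutPaourr} directly: it expands $\Ex\bigl(\sum_i|X_i|^r\ind_{\{X_i\ge td_i\}}\bigr)^l$ into probabilities of intersections $B_{i_1,k_1,\dots,i_l,k_l}$ of coordinate tail events, stratifies them by the size of $\Pr(B)$, and controls each stratum by induction on the number of coordinates using a conditional Paouris-type estimate (Proposition \ref{conditionalr}, itself obtained by applying \eqref{eq:Paour} to an auxiliary conditioned and reweighted vector) together with the counting Lemma \ref{combf}. Theorem \ref{thm:Paourlr}, and hence Theorem \ref{thm:sublr}, then follows by applying Theorem \ref{thm:cutPaourr} at the level $\tilde p=\inf\{q\ge p:\ \sigma_{r,X}(q)\ge d\}$ with $t=0$. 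Any rescue of your plan would require proving a statement of comparable strength uniformly over the random reweightings, which is not easier than the direct route.
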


\begin{rem}
Let $X$ and $F$ be as above. Then by Chebyshev's inequality we obtain large deviation estimate for $\|X\|$:
\[
\Pr(\|X\|\geq 2eC_2rt\Ex\|X\|)\leq \exp\left(-\sigma_{X,F}^{-1}(t\Ex\|X\|)\right) \quad \mbox{ for }t\geq 1,
\]
where
\[
\sigma_{X,F}(p):=\sup_{\varphi\in F^*,\|\varphi\|_*\leq 1}(\Ex\varphi(X)^p)^{1/p}  \quad  \mbox{ for } p\geq 1
\]
denotes the weak $p$-th moment of $\|X\|$.
\end{rem}

\begin{rem}
If $i\colon F\rightarrow \ell_r$ is a nonisometric embedding and $\lambda=\|i\|_{F\to \ell_r}\|i^{-1}\|_{i(F)\to F}$,
then we may define another norm on $F$ by $\|x\|':=\|i(x)\|/\|i\|_{F\to \ell_r}$. Obviously $(F,\|\ \|')$ 
isometrically embeds in $\ell_r$, moreover $\|x\|'\leq \|x\|\leq \lambda\|x\|'$ for $x\in F$. Hence
Theorem \ref{thm:sublr} gives
\begin{align*}
(\Ex\|X\|^p)^{1/p}
&\leq \lambda (\Ex(\|X\|')^p)^{1/p}\leq
C_2r\lambda\left(\Ex\|X\|'+\sup_{\varphi\in F^*,\|\varphi\|_*^{'}\leq 1}(\Ex|\varphi(X)|^p)^{1/p}\right)
\\
&\leq C_2r\lambda\left(\Ex\|X\|+\sup_{\varphi\in F^*,\|\varphi\|_*\leq 1}(\Ex|\varphi(X)|^p)^{1/p}\right).
\end{align*}
\end{rem}

Since log-concavity is preserved under linear transformations and, by  the Hahn-Banach theorem, any linear functional on 
a subspace of $\ell_r$ is a restriction of a~functional on the whole $\ell_r$ with the same norm, it is enough to prove 
Theorem \ref{thm:sublr} for $F=\ell_r$. An easy approximation argument shows that we may consider finite dimensional 
spaces $\ell_r^n$.
To simplify the notation for an $n$-dimensional vector $X$ and $p\geq 1$ we write
\[
\sigma_{r,X}(p):=\sup_{\|t\|_{r'}\leq 1}\left(\Ex\left|\sum_{i=1}^n t_iX_i\right|^p\right)^{1/p},
\]
where $r'$ denotes the H\"older's dual of $r$, i.e. $r'=\frac{r}{r-1}$  for $r>1$ and $r'=\infty$ for $r=1$.

\begin{thm}
\label{thm:Paourlr}
Let $X$ be a finite dimensional log-concave vector and  $r\in[1,\infty)$. Then
\[
(\Ex\|X\|_r^p)^{1/p}\leq C_2r\left(\Ex\|X\|_r+\sigma_{r,X}(p)\right)\quad \mbox{ for }p\geq 1.
\] 
\end{thm}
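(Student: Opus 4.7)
The plan is to adapt the streamlined proof of Paouris's inequality by Adamczak et al.\ \cite{ALLOPT} to the $\ell_r$ setting. By the reductions stated just before the theorem, I would assume $X$ is log-concave in $\er^n$, and by homogeneity further normalize $\Ex\|X\|_r=1$; setting $\sigma:=\sigma_{r,X}(p)$, the target becomes $(\Ex\|X\|_r^p)^{1/p}\le C r(1+\sigma)$. The starting point is the dual representation
\[
\|X\|_r=\sup_{t\in B_{r'}^n}\langle t, X\rangle,
\]
which recasts the task as bounding the $p$-th moment of a supremum of linear functionals of $X$ indexed by $B_{r'}^n$.

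The main mechanism would be a coordinate-wise \emph{magnitude decomposition}. I would fix a threshold $\lambda=\lambda(p,r,\sigma)$ and split $X_i = X_i\ind_{|X_i|\le\lambda} + X_i\ind_{|X_i|>\lambda}=:X_i'+X_i''$, so that $\|X\|_r\le\|X'\|_r+\|X''\|_r$. For the bounded part, I would exploit the coordinate bound $|X_i'|\le\lambda$ together with Borell-type concentration for convex functions of log-concave vectors to obtain subexponential concentration of $\|X'\|_r$ around $\Ex\|X\|_r$, producing moments of order $\Ex\|X\|_r$. For the tail part, Borell's lemma applied to the one-dimensional log-concave marginals gives $\Pr(|X_i|>\lambda)\le e^{-c\lambda/\Ex|X_i|}$, so the ``heavy'' index set $S:=\{i:|X_i|>\lambda\}$ is typically small; conditioned on $|S|=k$, duality produces $t\in B_{r'}^n$ supported on $S$ with $\langle t, X''\rangle=\|X''\|_r$, and a union bound over the $\binom{n}{k}$ admissible patterns, combined with a $\psi_1$ tail estimate for $\langle t, X\rangle$ (whose $L^p$-norm is bounded by $\sigma$ by the very definition of $\sigma_{r,X}(p)$), controls this contribution.

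The main obstacle I anticipate is producing a constant of exactly order $r$, rather than some power of $p$ or $\log n$. This factor should arise from a geometric series of the form $\sum_k 2^{-k/r'}\asymp r'$ coming from peeling $B_{r'}^n$ by coordinate magnitude, combined with entropy estimates $\log\binom{n}{k}\sim k\log(en/k)$ matched against the $\psi_1$ tails of linear functionals of~$X$. The threshold $\lambda$ must be chosen so that the Borell concentration of the bounded part balances the union-bound cost of the tail part; log-concavity of $X$ enters both steps, via Borell's lemma for tails of marginals and via the $\psi_1$ behavior of linear functionals of log-concave vectors.
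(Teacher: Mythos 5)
Your plan diverges substantially from the paper's, and as written it has two gaps that I think are fatal rather than technical. For context: the paper proves Theorem \ref{thm:Paourlr} in a few lines from Theorem \ref{thm:cutPaourr}, by choosing $\tilde p=\inf\{q\ge p:\ \sigma_{r,X}(q)\ge d\}$ so that the truncation threshold there can be taken to be $t=0$ and no complementary ``bounded part'' ever needs to be estimated; all the work is in Theorem \ref{thm:cutPaourr}, which is proved by expanding the $l$-th moment of $\sum_i|X_i|^r\ind_{\{X_i\ge td_i\}}$ over multi-indices, classifying terms by the decay pattern of $\Pr(B_{i_1,k_1,\ldots,i_s,k_s})$ via Lemma \ref{combf}, and feeding in Proposition \ref{conditionalr}, which applies the Euclidean Paouris inequality to a conditioned, reweighted vector. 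Your coordinate-magnitude truncation superficially resembles the truncation in Theorem \ref{thm:cutPaourr}, but your proposed mechanisms for both halves do not work.

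For the bounded part: the map $x\mapsto\|(x_i\ind_{\{|x_i|\le\lambda\}})_i\|_r$ is not convex and the truncated vector $X'$ is not log-concave, so Borell-type concentration for convex functions does not apply; moreover, log-concave measures have no dimension-free convex concentration at scale $\Ex\|X\|_r$ with faster-than-exponential decay --- already for a one-dimensional exponential variable $(\Ex (X')^p)^{1/p}$ is of order $p\,\Ex|X|$, not $C\,\Ex|X|$, however you truncate. Getting $(\Ex\|X'\|_r^p)^{1/p}\lesssim \Ex\|X\|_r+\sigma_{r,X}(p)$ for the ``regular'' part is essentially the theorem itself, so invoking concentration here is circular. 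For the tail part: a union bound over the $\binom{n}{k}$ supports plus a net of $B_{r'}^S$ costs $\exp(Ck\log(en/k))$ in the exponent, so matching it against the $\psi_1$ tails of $\langle t,X\rangle$ yields only $\sigma_{r,X}(Ck\log(en/k))\lesssim \log(en/k)\,\sigma_{r,X}(Ck)$, i.e.\ a spurious logarithmic factor; removing that logarithm is the known hard point even for $r=2$, and the proof in \cite{ALLOPT} does it by a bootstrapping iteration that your sketch does not reproduce. You concede this yourself when you say the main obstacle is getting a constant of order $r$ rather than a power of $p$ or $\log n$ --- that obstacle is the theorem. So the proposal is an outline with the two decisive steps missing, not a proof.
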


To show the above theorem we follow the approach from \cite{La3} and establish the following result.

\begin{thm}
\label{thm:cutPaourr}
Suppose that $r\in [1,\infty)$ and $X$ is a log-concave $n$-dimensional random vector. Let 
\begin{equation}
\label{eq:defd}
d_i:=(\Ex X_i^2)^{1/2},\quad d:=\left(\sum_{i=1}^n d_i^r\right)^{1/r}.
\end{equation}
Then for $p\geq r$,
\begin{equation}
\label{eq:cutPaour}
\Ex\left(\sum_{i=1}^n|X_i|^r\ind_{\{|X_i|\geq td_i\}}\right)^{p/r}\leq (C_3r\sigma_{r,X}(p))^{p} 
\quad \mbox{ for } t\geq C_4r\log\left(\frac{d}{\sigma_{r,X}(p)}\right). 
\end{equation}
\end{thm}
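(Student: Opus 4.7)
The plan is to follow the strategy of Lata{\l}a \cite{La3}, which proves the analogous truncation estimate for the Euclidean norm ($r=2$), and to adapt it so that the final constant is linear in $r$.

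Write $\sigma:=\sigma_{r,X}(p)$ and $Y_i:=|X_i|^r\ind_{\{|X_i|\geq td_i\}}$; the goal is to bound $\Ex(\sum_i Y_i)^{p/r}$ by $(C_3r\sigma)^p$. Since each marginal $X_i$ is itself one-dimensional log-concave, Borell's lemma produces the sub-exponential tail $\Pr(|X_i|\geq sd_i)\leq K\exp(-s/K)$ for $s\geq K$. On the other hand, testing the defining supremum of $\sigma$ on $a=e_i$ gives $\|X_i\|_p\leq\sigma$, and the log-concave moment comparison $\|X_i\|_q\leq C(q/p)\|X_i\|_p$ for $q\geq p\geq1$ yields a second, sharper tail $\Pr(|X_i|\geq u)\leq\exp(-cpu/\sigma)$ for $u\geq c'\sigma$. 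Integration by parts against these estimates produces a per-coordinate bound of the form $\Ex|X_i|^p\ind_{\{|X_i|\geq td_i\}}\leq (Ctd_i)^p e^{-t/K}$ when $t$ is large, and unconditionally $\Ex|X_i|^p\ind_{\{|X_i|\geq td_i\}}\leq\sigma^p$.

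The next step is to sum these coordinate estimates. A bare triangle inequality in $L^{p/r}$ (valid since $p\geq r$) applied to $\sum_i Y_i$ produces only a bound of the form $(Ctd)^p e^{-t/K}$, and plugging in $t\asymp r\log(d/\sigma)$ would force $C_4$ to depend on $p/r$. The remedy, modeled on \cite{La3}, is a dyadic decomposition of excursion levels together with a splitting of the indices by the size of $d_i$. Write
\[
\sum_i|X_i|^r\ind_{\{|X_i|\geq td_i\}}=\sum_{k\geq 0}\sum_i|X_i|^r\ind_{\{2^k td_i\leq|X_i|<2^{k+1}td_i\}},
\]
split the indices at each level into those with $d_i\gtrsim\sigma/(2^kt)$ (where the sharp tail $\exp(-cp u/\sigma)$ is active) and those with $d_i$ smaller (where the Borell tail $\exp(-s/(Kd_i))$ dominates), and bound the $p/r$-moment at each level using the appropriate estimate. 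For both classes the resulting bound decays geometrically in $k$, and the threshold $t\geq C_4 r\log(d/\sigma)$ is exactly what is needed to make the geometric series sum to $(C_3r\sigma)^r$, which after raising to the power $p/r$ gives the claim.

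The main obstacle is this two-scale bookkeeping: the linear-in-$r$ constant, uniform in $p\geq r$, comes from combining the two tail regimes coherently across the two index classes at every dyadic level, and keeping the factor $r$ from the $\ell_{r'}$-duality under control via the identity $d^r=\sum_i d_i^r$. The corresponding step in \cite{La3} handles exactly this combination for $r=2$; the present argument is its extension, where the $\ell_r$-geometry introduces the extra factor of $r$ in the bound.
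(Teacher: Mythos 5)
Your overall framing (dyadic excursion levels, two tail regimes, threshold $t\gtrsim r\log(d/\sigma)$) points in the right direction, but the core mechanism you propose --- controlling $\Ex(\sum_i Y_i)^{p/r}$ by per-coordinate tail and moment estimates for the marginals $X_i$, summed via the triangle inequality in $L^{p/r}$ at each dyadic level --- cannot work, and it is not what \cite{La3} does. Two concrete failures. First, the bound $\Ex|X_i|^p\ind_{\{|X_i|\geq td_i\}}\leq (Ctd_i)^pe^{-t/K}$ is false unless $t\gtrsim p$: integrating $pu^{p-1}$ against the Borell tail $e^{2-u/(2ed_i)}$ produces a contribution of order $(Cpd_i)^p$ when $p\gg t$, whereas the theorem must hold for every $p\geq r$ with $t$ only of order $r\log(d/\sigma)$, independent of $p$. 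Second, even in the regime where the sharper tail $\exp(-cpu/\sigma)$ applies, the coordinatewise sum $\sum_i\bigl(\Ex|X_i|^p\ind_{\{|X_i|\geq 2^ktd_i\}}\bigr)^{r/p}$ can be of order $n\sigma^re^{-cr}$ (take all $d_i$ equal with $2^ktd_i\asymp\sigma$), which is not bounded by $(Cr\sigma)^r$ uniformly in the dimension. Marginal information alone cannot yield a dimension-free estimate here; that is precisely the obstruction that makes the theorem nontrivial.

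The paper's proof requires joint information about the coordinates, and its structure is genuinely different from your sketch. One expands $\Ex\bigl(\sum_iX_i^r\ind_{\{X_i\geq td_i\}}\bigr)^l$ for an integer $l\asymp p/r$ as a sum over multi-indices $(i_1,k_1,\ldots,i_l,k_l)$ of terms $2^{(k_1+\cdots+k_l)r}d_{i_1}^r\cdots d_{i_l}^r\Pr(B_{i_1,k_1,\ldots,i_l,k_l})$, where $B$ is the intersection of the events $\{X_{i_s}\geq 2^{k_s}td_{i_s}\}$. The key input is the conditional estimate of Proposition \ref{conditionalr}: for a convex event $A$ one has $\sum_i\Ex|X_i|^r\ind_{A\cap\{X_i\geq td_i\}}\leq C^r\Pr(A)\bigl(r^r\sigma^r_{r,X}(-\log\Pr(A))+(dt)^re^{-t/C}\bigr)$, proved by applying the Paouris inequality to a reweighted version of $X$ conditioned on $A$ and then passing to $\ell_r$ via $\ell_{r'}$-duality (this is where the factor $r$ enters). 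That estimate drives an induction over the $l$ slots of the multi-index, with the multi-indices classified by the profile of successive conditional-probability drops and the profiles counted by Lemma \ref{combf}. To repair your argument you would need to replace the per-coordinate tail bounds by such conditional estimates; without them the proposed two-scale bookkeeping has nothing that actually closes the estimate.
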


\begin{rem}
Any finite dimensional space  embeds isometrically in $\ell_\infty$, so to show Conjecture~\ref{conj:ws}
it is enough to establish Theorem \ref{thm:sublr} (with a universal constant in place of $C_2r$) for $r=\infty$. 
Such  a result was shown for isotropic log-concave vectors (i.e. log-concave vectors with mean zero and identity
covariance matrix), cf.\ \cite[Corollary 3.8]{La4}. 
 However a linear image of an isotropic vector does not have to be isotropic, so to establish the conjecture
we need to consider either isotropic vectors and an arbitrary norm or vectors with a general covariance structure
and the standard $\ell_\infty$-norm.
\end{rem}

\section{Proofs}

Let us first discuss the notation. By $C$ we denote universal constants, the value of $C$ may differ
at each occurrence. Whenever we want to fix the value of an absolute constant we use letters $C_1,C_2,\ldots$.
We may always assume that $C_i\geq 1$. 
By $|I|$ we denote the cardinality of a set $I$. 
For an $n$-dimensional random vector $Z$ and $a\in \er^n$ we write $aZ$ for the vector $(a_iZ_i)_{i}$.
Observe that $\Ex\|aZ\|_2^2=\sum_{i}a_i^2\Ex Z_i^2$.

Let us recall some useful facts about log-concave vectors (for details see \cite{La3}). 
If $Z$ is log-concave real random variable then
\[
\Pr(|Z|\geq t)\leq \exp\left(2-\frac{t}{2e(\Ex Z^2)^{1/2}}\right)\quad \mbox{for } t\geq 0.
\]
Moreover, if $f:\er^n \to \er$ is a seminorm, $(\Ex f(Z)^p)^{1/p}\leq C_5\frac{p}{q}(\Ex f(Z)^q)^{1/q}$ 
for $p\geq q\geq 1$ (see \cite[Theorem 2.4.6]{BGVV}). Therefore for
any log-concave vector $X$ and any $r$,
\[
\sigma_{r,X}(\lambda p)\leq C_5\lambda \sigma_{r,X}(p)\quad \mbox{for } \lambda\geq 1,\ p\geq 2. 
\]

The Paouris inequality (\ref{eq:Paour}) together with Chebyshev's inequality imply
\begin{equation}
\label{eq:Paourtail}
\Pr\left(\|X \|_2\geq eC_1\left((\Ex \|X \|_2^2)^{1/2}+\sigma_{X}(p)\right)\right)
\leq 
e^{-p} \quad \mbox{ for }p\geq 1.
\end{equation}

The next proposition generalizes Proposition 4 from \cite{La3}.

\begin{prop}
\label{conditionalr} 
Let $X$, $r$, $d_i$, and $d$ be as in Theorem \ref{thm:cutPaourr}
and $A:=\{X\in K\}$, where $K$ is a convex set in $\er^n$ satisfying $0<\Pr(A)\leq 1/e$. Then
\\
(i) for every $t\geq r$,  
\begin{equation}
\label{eq:cond1r}
\sum_{i=1}^n\Ex |X_i|^r\ind_{A\cap\{X_i\geq td_i\}}\leq 
C_6^r\Pr(A)\left(r^r\sigma_{r,X}^r(-\log(\Pr(A)))+(dt)^re^{-t/C_7}\right).
\end{equation}
(ii) for every $t>0$, $u\geq 1$, 
\begin{align}
\notag
\sum_{k=0}^{\infty}2^{kr}\sum_{i=1}^nd_i^r&\ind_{\{\Pr(A\cap\{X_i\geq 2^k td_i\})\geq e^{-u}\Pr(A)\}}
\\
\label{eq:cond2r}
&\leq
\frac{(C_8 u)^r}{t^r}\left(\sigma_{r,X}^r(-\log(\Pr(A)))+d^r\ind_{\{t\leq uC_9\}}\right) .
\end{align}
\end{prop}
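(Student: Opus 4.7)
Both parts hinge on the fact that, since $A$ is convex, $Y := X\mid A$ is log-concave. Throughout let $p := -\log\Pr(A)\geq 1$. The preparatory bound is: for any unit vector $\varphi\in\ell_{r'}$, $\Ex|\varphi(X)|^p\leq\sigma_{r,X}(p)^p$, so
\[
(\Ex_A|\varphi(X)|^p)^{1/p} = \Pr(A)^{-1/p}(\Ex|\varphi(X)|^p\ind_A)^{1/p}\leq e\,\sigma_{r,X}(p).
\]
Specialising to the $i$th coordinate (since $e_i$ has unit $\ell_{r'}$-norm) and applying the log-concave moment comparison cited before the proposition to the seminorm $x\mapsto|x_i|$ on the log-concave vector $Y$ yields the a~priori estimate $\tilde d_i := (\Ex_A X_i^2)^{1/2}\leq C\,\sigma_{r,X}(p)$. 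Borell's tail bound applied to $X_i\mid A$ then gives $\Pr_A(X_i\geq s)\leq\exp(2-s/(2e\tilde d_i))$, while the unconditional version gives $\Pr_A(X_i\geq s)\leq e^{p+2}\exp(-s/(2ed_i))$; the proof alternates between these two tail estimates.

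For (i) I would apply H\"older on the conditional expectation with exponents $p/r$ and $p/(p-r)$,
\[
\Ex_A|X_i|^r\ind_{X_i\geq td_i}\leq(\Ex_A|X_i|^p)^{r/p}\Pr_A(X_i\geq td_i)^{(p-r)/p}\leq (e\sigma_{r,X}(p))^r\Pr_A(X_i\geq td_i)^{(p-r)/p},
\]
then take the minimum of the conditional and unconditional tail bounds for $\Pr_A(X_i\geq td_i)$. Multiplying by $\Pr(A)$, summing over $i$ using $\sum d_i^r = d^r$, and integrating (the factor $r^r$ comes from $\int s^{r-1}e^{-s/C}\,ds\asymp(Cr)^r$) produces the two contributions: the $r^r\sigma_{r,X}^r(p)$ term from the bulk regime and the $(dt)^re^{-t/C_7}$ term from the far-tail regime where the unconditional estimate is the sharper of the two.

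For (ii), the central identity is
\[
\sum_k 2^{kr}\sum_i d_i^r\ind_{\{i\in I_k\}} = \sum_i d_i^r\sum_{k:i\in I_k}2^{kr},\qquad I_k := \{i:\Pr_A(X_i\geq 2^ktd_i)\geq e^{-u}\}.
\]
Since $\Pr_A(X_i\geq s)$ is non-increasing in $s$, $\{k\geq 0:i\in I_k\}$ is an initial segment $\{0,\ldots,k_i\}$; combining $e^{-u}\leq\Pr_A(X_i\geq 2^{k_i}td_i)\leq\exp(2-2^{k_i}td_i/(2e\tilde d_i))$ gives $2^{k_i}\leq Cu\tilde d_i/(td_i)$, and the geometric sum then yields
\[
\sum_k 2^{kr}\sum_{i\in I_k}d_i^r\leq 2\,(Cu/t)^r\sum_{i\in I_0}\tilde d_i^r.
\]
It remains to show $\sum_{i\in I_0}\tilde d_i^r\leq C^r[\sigma_{r,X}^r(p)+d^r\ind_{t\leq uC_9}]$. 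I would split $\tilde d_i^2 = \Ex_A X_i^2\ind_{|X_i|\leq Md_i} + \Ex_A X_i^2\ind_{|X_i|>Md_i}$ with $M$ of order $u$: the first summand is bounded by $M^2 d_i^2$ (yielding the $d^r$ contribution in the regime $t\leq uC_9$), and the second is controlled via part (i) applied with threshold $M$ (yielding the $\sigma_{r,X}^r(p)$ term), with Jensen or log-concave moment comparison used to pass from the $L^2$ quantity to $|X_i|^r$.

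The main obstacle is the dichotomy in (ii): when $t>uC_9$ the $d^r$ term must disappear from the upper bound, so a single-scale split with $M\asymp u$ is not enough on its own. The refinement exploits the automatic consequence of membership in $I_0$, namely $d_i\leq Cu\tilde d_i/t\leq C'u\sigma_{r,X}(p)/t$: once $t$ exceeds a multiple of $u$, each contributing $d_i$ is so small that the apparent $d^r$ contribution is absorbed by $\sigma_{r,X}^r(p)$. Threading this case analysis through the constants, while tracking the $r^r$ factor from the tail integral in (i) as it propagates through the chaining, is where the technical work lies; the underlying tools (H\"older, Borell's tail, moment comparison) are standard.
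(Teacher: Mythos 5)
Your outline gets the soft structure right (conditioning on the convex event preserves log-concavity, the conditional versus unconditional tail dichotomy, the geometric resummation over $k$ in (ii)), but it is missing the one ingredient the whole proposition turns on: a bound on the \emph{sum} $\sum_{i\in I}\bigl(\Ex[X_i^2\mid A]\bigr)^{r/2}\le C^r\sigma_{r,X}^r(-\log\Pr(A))$ over the set $I$ of coordinates whose conditional second moment is inflated ($\tilde d_i\ge v d_i$). Your preparatory bound only gives the termwise estimate $\tilde d_i\le C\sigma_{r,X}(p)$; summing it over the contributing coordinates costs a factor of their cardinality, which can be as large as $n$, and every place where you need summability --- the bulk term in (i), the reduction of (ii) to $\sum_{i\in I_0}\tilde d_i^r$, and the absorption of the $d^r$ term when $t>uC_9$ via $d_i\le Cu\tilde d_i/t$ --- inherits this dimension-dependent loss. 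The paper's proof of the summed bound is the hard core of the argument: it applies the Paouris inequality to the auxiliary reweighted vector $\tilde X=(|a_i|^{1/2}c_i^{-1/2}X_i)_{i\in I}$ conditioned on a fourth-moment event, lower-bounds $\Pr\bigl(\|\tilde X\|_2^2\ge\frac12\sum_{i\in I}|a_i|c_i\bigr)$ via Paley--Zygmund on the event $A$, upper-bounds $\sigma_{\tilde X}(p)$ by a three-way truncation, and closes with a duality step choosing $a_i=c_i^{r-1}$. None of this is replaceable by the individual moment comparison you invoke.

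Two further steps would fail even granting summability. In (i), the H\"older bound $\Ex_A|X_i|^r\ind_{\{X_i\ge td_i\}}\le(e\sigma_{r,X}(p))^r\,\Pr_A(X_i\ge td_i)^{(p-r)/p}$ destroys the $d_i^r$ structure, so ``summing over $i$ using $\sum d_i^r=d^r$'' is not available and a factor $n$ reappears; and the far-tail term cannot come from the unconditional tail, because the right-hand side of \eqref{eq:cond1r} carries the prefactor $\Pr(A)$, whereas $\Ex|X_i|^r\ind_{A\cap\{X_i\ge td_i\}}\le\Ex|X_i|^r\ind_{\{X_i\ge td_i\}}$ loses exactly this factor $e^{-p_A}$. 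The paper instead uses that for $i\notin I$ the \emph{conditional} tail already decays like $e^{-s/(Cd_i)}$, precisely because $\Ex Y_i^2\le v^2d_i^2$ there. Finally, deriving (ii) from (i) would import the $r^r$ of \eqref{eq:cond1r} into \eqref{eq:cond2r}, where it does not belong (and would degrade the constant $C_2r$ in the main theorem); the paper uses the second-moment bound $\sum_{i\in I}(\Ex Y_i^2)^{r/2}\le C^r\sigma_{r,X}^r(p_A)$ directly in part (ii), not part (i).
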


\begin{proof}
Let $Y$ be a random vector defined by 
\[
\Pr(Y\in B)=\frac{\Pr(A\cap\{X\in B\})}{\Pr(A)}=\frac{\Pr(X\in B\cap K)}{\Pr(X\in K)},  
\]
i.e. $Y$ is distributed as $X$ conditioned on $A$. Clearly, for every measurable set $B$  
one has $\Pr(X\in B)\geq \Pr(A)\Pr(Y\in B)$. It is easy to see that $Y$ is log-concave.

To simplify the notation set 
\[
p_A:=-\log \Pr(A)\quad \mbox{ and }\quad c_i:=(\Ex Y_i^2)^{1/2}, \ i=1,\ldots,n.
\] 

Let 
\[
I=I(v):=\{i\leq n\colon\ \Ex Y_i^2\geq v^2d_i^2\},
\]
where $v$ is an absolute constant to be chosen later. Let us also fix a sequence $(a_i)_{i\leq n}$.

Put $S=\sum_{i\in I}|a_{i}|c_i^{-1}Y_i^2$. Observe that $S=\|((|a_i|/c_i)^{1/2}Y_i)_{i\in I}\|_2^2$, hence by  the log-concavity
of $Y$, $\Ex S^2\leq (2C_5)^{4}(\Ex S)^2$, and the Paley-Zygmund inequality yields
\begin{equation}
\label{eq:estY1}
\Pr\left(\sum_{i\in I}|a_{i}|c_i^{-1}Y_i^2\geq \frac{1}{2}\sum_{i\in I}|a_i|c_i\right)
=\Pr\left(S\geq \frac{1}{2}\Ex S\right)\geq \frac{1}{4}\frac{(\Ex S)^2}{\Ex S^2}
\geq \frac{1}{(2\sqrt{2}C_5)^4}.
\end{equation}
We have $\Ex Y_i^4\leq (2C_5c_i)^4$, so by Chebyshev's inequality we get
\begin{equation}
\label{eq:estY2}
\Pr\left(\sum_{i\in I} |a_i|c_i^{-3}Y_i^4\geq (2C_5)^4s \sum_{i\in I} |a_i| c_i \right) \leq \frac{1}{s}
\quad \mbox{ for }s>0.
\end{equation}
Combining \eqref{eq:estY1} and \eqref{eq:estY2} we conclude that there exist constants $C_{10},C_{11}$ such that
\[
\Pr\left(\sum_{i\in I}|a_i|c_i^{-1}Y_i^2\geq \frac{1}{2} \sum_{i\in I} |a_i|c_i,\ 
\sum_{i\in I}|a_i|c_i^{-3}Y_i^4\leq C_{10}\sum_{i\in I}|a_i|c_i\right)
\geq \frac{1}{C_{11}} 
\]
and therefore
\[
\Pr\left(\sum_{i\in I}|a_i|c_i^{-1}X_i^2\geq \frac{1}{2}\sum_{i\in I}|a_i|c_i,\ 
\sum_{i\in I}|a_i|c_i^{-3}X_i^4\leq C_{10}\sum_{i\in I}|a_i|c_i\right)
\geq \frac{1}{C_{11}}\Pr(A)\geq  e^{-C_{12}p_A}. 
\]

Let $\tilde{X}$ be the vector $(|a_i|^{1/2}c_i^{-1/2}X_i)_{i\in I}$ conditioned on the set 
\[
B:=\left\{\sum_{i\in I}|a_i|c_i^{-3}X_i^4\leq C_{10}\sum_{i\in I}|a_i|c_i\right\}.
\]
Then
\begin{equation}
\label{eq:esttailtildeX}
\Pr\left(\|\tilde{X}\|_2^2\geq \frac{1}{2}\sum_{i\in I}|a_i|c_i\right)
\geq \frac{1}{\Pr(B)}e^{-C_{12}p_A}\geq e^{-C_{12}p_A}.
\end{equation}

The random vector $\tilde{X}$ is log-concave and $\Pr(B)\geq 1/2$ if $v$ is sufficiently large 
(since $\Ex X_i^4\leq Cd_i^4\leq Cv^{-4}c_i^4$ for $i\in I$). Thus
\begin{equation}
\label{eq:estvartildeXunis}
\Ex \|\tilde{X}\|_2^2=\frac{1}{\Pr(B)}\Ex\left(\sum_{i\in I} |a_i|c_i^{-1}X_i^2 \ind_B\right)
\leq 
2\sum_{i\in I}\Ex |a_i|c_{i}^{-1}d_i^2\leq 2v^{-2}\sum_{i\in I}|a_i|c_i.
\end{equation}

Now we will estimate $\sigma_{\tilde{X}}(p)$. To this end fix $t\in \er^I$ with $\|t\|_2\leq 1$.
Let $\alpha,s>0$ be numbers to be chosen later and 
\[
J_{\alpha}:=\{i\in I\colon\ |t_i|(|a_i|c_{i})^{-1/2}\leq \alpha\}. 
\]
We have
\[
\left\|\sum_{i\in J_\alpha} t_i\tilde{X}_i\right\|_p
\leq \Pr(B)^{-1/p}\left\|\sum_{i\in J_\alpha} t_i(|a_i|c_{i})^{-1/2}|a_i|X_i\right\|_p\leq
2\alpha\sigma_{1,aX}(p).
\]
Moreover
\[
\left\|\sum_{i\notin J_\alpha} t_i\tilde{X}_i { \ind}_{\{|\tilde{X}_i|\leq s(|a_i|c_i)^{1/2}\}}\right\|_p
\leq \sum_{i\notin J_\alpha} s|t_i|(|a_i|c_i)^{1/2}\leq  
s\sum_{i\notin J_\alpha} \frac{|t_i|^2}{|t_i|(|a_i|c_i)^{-1/2}}\leq \frac{s}{\alpha}\sum_{i\in I}t_i^2
\leq \frac{s}{\alpha}.
\]

Observe that by the definition of the set $B$ and the vector $\tilde{X}$ we have
\[
\sum_{i\in I}(|a_i|c_i)^{-1}\tilde{X}_i^4\leq C_{10}\sum_{i\in I}|a_i|c_i.
\]
Thus
\begin{align*}
\left\|\sum_{i\notin J_\alpha} t_i\tilde{X}_i { \ind}_{\{|\tilde{X}_i|>s(|a_i|c_i)^{1/2}\}}\right\|_p 
&\leq 
\left\|\left(\sum_{i\notin J_\alpha}\tilde{X}_i^2 { \ind}_{\{|\tilde{X}_i|>s(|a_i|c_i)^{1/2}\}}\right)^{1/2}\right\|_p 
\\ 
&
\leq \left\|\frac{1}{s}\left(\sum_{i\in I}(|a_i|c_{i})^{-1}\tilde{X}_i^4\right)^{1/2}\right\|_p 
\leq \frac{1}{s}\left(C_{10}\sum_{i\in I}|a_i|c_i\right)^{1/2}.
\end{align*}

Combining the above estimates we obtain
\[
\left\|\sum_{i\in I} t_i\tilde{X}_i\right\|_p\leq 2\alpha\sigma_{1,aX}(p)+\frac{s}{\alpha}+
\frac{1}{s}\left(C_{10}\sum_{i\in I}|a_i|c_i\right)^{1/2}.
\]
Taking the supremum over $t$  and optimizing over $\alpha>0$  we get
\begin{equation}
\label{eq:estsigmatildeXunis}
\sigma_{\tilde{X}}(p)\leq 4(s\sigma_{1,aX}(p))^{1/2}+\frac{1}{s}\left(C_{10}\sum_{i\in I}|a_i|c_i\right)^{1/2} 
\quad \mbox{ for }s>0.
\end{equation}

Paouris' inequality \eqref{eq:Paourtail} (applied to $\tilde{X}$ instead of $X$) together with 
\eqref{eq:estvartildeXunis} and
\eqref{eq:estsigmatildeXunis} implies that
\begin{align*}
\Pr\left(\|\tilde{X}\|_2\geq eC_1\left(\left(\frac{2}{v^2}\sum_{i\in I}|a_i|c_i\right)^{1/2}
+4(s\sigma_{1,aX}(C_{12}p_A))^{1/2}
+\frac{1}{s}\left(C_{10}\sum_{i\in I}|a_i|c_i\right)^{1/2}\right)\right)\phantom{a}&
\\
< e^{-C_{12}p_A}&
\end{align*}
Comparing the above with \eqref{eq:esttailtildeX} we get
\begin{align*}
eC_1\left(\left(\frac{2}{v^2}\sum_{i\in I}|a_i|c_i\right)^{1/2}+4(s\sigma_{1,aX}(C_{12}p_A))^{1/2}
+\frac{1}{s}\left(C_{10}\sum_{i\in I}|a_i|c_i\right)^{1/2}\right)\phantom{aaaaaaaaaaaa}&
\\
\geq
\left(\frac{1}{2}\sum_{i\in I}|a_i|c_i\right)^{1/2}&.
\end{align*}
If we choose $s$ and $v$ to be sufficiently large absolute constants we will get
\[
\sum_{i\in I}|a_i|(\Ex Y_i^2)^{1/2}=\sum_{i\in I}|a_i|c_i 
\leq C\sigma_{1,aX}(C_{12}p_A)\leq C\sigma_{1,aX}(p_A).
\]

Put $a_i :=( \Ex|Y_i|^2)^{(r-1)/2}\ind_{i\in I}$. 
If $\|t\|_\infty\leq 1$, then $(\sum|t_ia_i|^{r'})^{1/r'} \le \|a \|_{r'}$. Thus the previous inequality implies
\[
\sum_{i\in I} \left(\Ex |Y_i|^2\right)^{r/2}  \leq C\sigma_{1,aX}(p_A) \leq 
C\|a\|_{r'} \sigma_{r,X}(p_A) =C  \left( \sum_{i\in I} \left(\Ex |Y_i|^2\right)^{r/2} \right)^{1/r'}  \sigma_{r,X}(p_A) .
\]
This gives
\[
\sum_{i\in I} (\Ex |Y_i|^2)^{r/2}  \leq C^r  \sigma_{r,X}^r(p_A). 
\]

Since $\|Y_i\|_r\leq \max\{1,C_5r/2\}\|Y_i\|_2$ we also get
\[
\sum_{i\in I} \Ex |Y_i|^r  \leq (Cr)^r  \sigma_{r,X}^r(p_A). 
\]

To prove \eqref{eq:cond1r} note that if $i\notin I$, then $\Pr(|Y_i|\geq sd_i)\leq 2e^{-s/C}$ for $s\geq 0$, hence for $t\geq r$,
$\Ex |Y_i|^r\ind_{\{Y_i\geq td_i\}}\leq (Ctd_i)^r  e^{-t/C}$ and
\[
\sum_{i\notin I}\Ex |Y_i|^r\ind_{\{Y_i\geq t\}}\leq (Ctd)^re^{-t/C}.
\]
Hence
\begin{align*}
\frac{1}{\Pr(A)}\sum_{i=1}^n\Ex |X_i|^r\ind_{A\cap\{X_i\geq td_i\}}
&=\sum_{i=1}^n\Ex |Y_i|^r\ind_{\{Y_i\geq td_i\}}
\\
&\leq C^r\left(r^r\sigma_{r,X}^r(-\log(\Pr(A)))+(dt)^re^{-t/C}\right).
\end{align*}

To show \eqref{eq:cond2r} note first that for every $i$ the random variable $Y_i$ is log-concave, hence for $s\geq 0$,
\[
\frac{\Pr(A\cap\{X_i\geq s\})}{\Pr(A)}=
\Pr(Y_i\geq s)\leq \exp\left(2-\frac{s}{2e\|Y_i\|_2}\right).
\]
Thus, if $\Pr(A\cap\{X_i\geq 2^ktd_i\})\geq e^{-u}\Pr(A)$ and $u\geq 1$, then $\|Y_i\|_2\geq
2^ktd_i/(2e (u+2))\geq 2^ktd_i/(6eu)$.  In particular this cannot happen if $i\notin I$, $k\geq 0$ and $u\leq t/C_9$ with 
$C_9$ large enough. 
Therefore
\begin{align*}
\sum_{k=0}^{\infty}2^{kr}\sum_{i=1}^nd_i^r&\ind_{\{\Pr(A\cap\{X_i\geq 2^k td_i\})\geq e^{-u}\Pr(A)\}}
\\
&\leq
\left(\sum_{i\in I}+\ind_{\{t\leq uC_9\}}\sum_{i\notin I}\right)
d_i^r\sum_{k=0}^{\infty}2^{kr}\ind_{\{(\Ex Y_i^2)^{1/2}\geq 2^ktd_i/(6eu)\}}
\\
&\leq \left(\sum_{i\in I}+\ind_{\{t\leq uC_9\}}\sum_{i\notin I}\right)d_i^r\frac{(Cu)^r}{(td_i)^r}(\Ex Y_i^2)^{r/2}
\\
&\leq \frac{(Cu)^r}{t^r}\left(\sum_{i\in I}(\Ex Y_i^2)^{r/2}+\ind_{\{t\leq uC_9\}}\sum_{i\notin I}d_i^r\right)
\\
&\leq \frac{(Cu)^r}{t^r}\left(\sigma_{r,X}^r(-\log(\Pr(A)))+d^r\ind_{\{t\leq uC_9\}}\right). \qedhere
\end{align*}
\end{proof}

We will also use the following simple combinatorial lemma (Lemma~11 in \cite{La}).

\begin{lem}
\label{combf}
Let $l_0\geq l_1\geq\ldots\geq l_s$ be a fixed sequence of
positive integers and
\[
{\cal  F}:=\left\{f\colon\{1,2,\ldots,l_0\} \rightarrow\{0,1,2,\ldots,s\}\colon\ 
\forall_{1\leq i\leq s}\ |\{r\colon f(r)\geq i\}|\leq l_i\right\}.
\]
Then
\[
|{\cal F}|\leq\prod_{i=1}^s \left(\frac{e l_{i-1}}{l_i}\right)^{l_i}.
\]
\end{lem}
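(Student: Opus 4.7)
The plan is to encode every $f\in\mathcal F$ by its sequence of upper-level sets
\[
A_i:=\{r\in\{1,\dots,l_0\}\colon f(r)\geq i\},\qquad i=1,\dots,s,
\]
and to count the admissible chains $A_1\supseteq A_2\supseteq\cdots\supseteq A_s$ instead of the functions themselves. The map $f\mapsto (A_1,\dots,A_s)$ is a bijection between $\mathcal F$ and the set of chains satisfying $A_i\subseteq A_{i-1}$ and $|A_i|\leq l_i$ for $i=1,\dots,s$, where by convention $A_0=\{1,2,\dots,l_0\}$ has size $l_0$.

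Next I would build the chain step by step. Having fixed $A_{i-1}$ of some cardinality $a_{i-1}\leq l_{i-1}$, the number of admissible $A_i$ is the number of subsets of $A_{i-1}$ of cardinality at most $l_i$. Using the standard binomial estimate
\[
\sum_{k=0}^{m}\binom{n}{k}\leq \left(\frac{en}{m}\right)^{m}\qquad (1\leq m\leq n),
\]
applied with $n=a_{i-1}\leq l_{i-1}$ and $m=l_i$ (which is legitimate because the sequence $(l_i)$ is non-increasing, so $l_i\leq l_{i-1}$), one obtains at most $(el_{i-1}/l_i)^{l_i}$ admissible choices for $A_i$ given $A_{i-1}$.

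Multiplying these bounds over $i=1,\dots,s$ gives
\[
|\mathcal F|\leq \prod_{i=1}^{s}\left(\frac{el_{i-1}}{l_i}\right)^{l_i},
\]
which is exactly the desired inequality. No step looks like a serious obstacle: the only point deserving attention is justifying the binomial estimate in the regime $l_i\leq l_{i-1}$, which is precisely where monotonicity of $(l_i)$ is used, and verifying that the correspondence $f\leftrightarrow (A_i)$ is indeed a bijection (which is immediate because $f(r)=\max\{i\colon r\in A_i\}$, with the maximum over the empty set interpreted as $0$).
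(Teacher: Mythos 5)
The paper does not reproduce a proof of this lemma — it cites it as Lemma~11 of \cite{La} — so I am comparing your argument against the standard proof, which is indeed the upper-level-set encoding you use. Your overall structure is sound: the map $f\mapsto(A_1,\dots,A_s)$ with $A_i=\{r\colon f(r)\geq i\}$ really is a bijection onto chains $\{1,\dots,l_0\}=A_0\supseteq A_1\supseteq\cdots\supseteq A_s$ with $|A_i|\leq l_i$, since $f$ is recovered by $f(r)=\max\{i\colon r\in A_i\}$, and counting chains step by step is exactly the right idea.

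There is, however, a small imprecision in the step where you invoke the binomial estimate. You apply $\sum_{k=0}^{m}\binom{n}{k}\leq(en/m)^m$ with $n=a_{i-1}$ and $m=l_i$, and justify $m\leq n$ by the monotonicity $l_i\leq l_{i-1}$. But $l_i\leq l_{i-1}$ together with $a_{i-1}\leq l_{i-1}$ does not give $l_i\leq a_{i-1}$; it is perfectly possible that $a_{i-1}<l_i$, in which case the estimate as you applied it is not valid (e.g.\ $n=1$, $m=2$ gives $2>(e/2)^2$). The clean fix is to use monotonicity of $\binom{n}{k}$ in $n$ first:
\[
\sum_{k=0}^{l_i}\binom{a_{i-1}}{k}\leq \sum_{k=0}^{l_i}\binom{l_{i-1}}{k}\leq \left(\frac{el_{i-1}}{l_i}\right)^{l_i},
\]
where now $m=l_i\leq l_{i-1}=n$ holds by hypothesis. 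With this one-line correction the proof is complete and matches the standard argument.
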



\begin{proof}[Proof of Theorem \ref{thm:cutPaourr}]
Observe that we may assume that $t\geq C_4 r$. Indeed, if $e\sigma_{r,X}(p)\leq d$ then by our assumption $t\geq C_4 r$.
If $e \sigma_{r,X}(p)> d$  then
\begin{align*}
\Bigg(\Ex\Bigg(\sum_{i=1}^n|X_i|^r&\ind_{\{|X_i|\geq td_i\}}\Bigg)^{p/r}\Bigg)^{1/p}
\\
&\leq
C_4 r \left(\sum_{i=1}^n d_i^r\right)^{1/r}
+\left(\Ex\left(\sum_{i=1}^n|X_i|^r\ind_{\{|X_i|\geq \max\{t,C_4r\}d_i\}}\right)^{p/r}\right)^{1/p}
\\
&\leq 
eC_4r\sigma_{r,X}(p)
+\left(\Ex\left(\sum_{i=1}^n|X_i|^r\ind_{\{|X_i|\geq \max\{t,C_4r\}d_i\}}\right)^{p/r}\right)^{1/p}.
\end{align*}

Moreover,  the vector $-X$ is also log-concave, has the same values of $d_i$ and $\sigma_{r,-X}=\sigma_{r,X}$. Hence
it is enough to show that
\[
\Ex\left(\sum_{i=1}^n X_i^r\ind_{\{X_i\geq td_i\}}\right)^{p/r}\leq (Cr\sigma_{r,X}(p))^{p}
\quad  \mbox{for }t\geq C_4r\max\left\{1 ,\log\left(\frac{d}{\sigma_{r,X}(p)}\right)\right\}.
\]

Observe that for $l=1,2,\ldots$,
\begin{align*}
\Ex\Bigg(\sum_{i=1}^nX_i^r \ind_{\{X_i\geq td_i\}}\Bigg)^l
& \leq 
\Ex\left(\sum_{i=1}^n\sum_{k=0}^{\infty}2^{(k+1)r}(td_i)^r\ind_{\{X_i\geq 2^ktd_i\}}\right)^l
\\ &
=(2t)^{rl}\sum_{i_1,\ldots,i_l=1}^n\sum_{k_1,\ldots,k_l=0}^{\infty} 2^{(k_1+\ldots+k_l)r}d_{i_1}^r\ldots d_{i_l}^r
\Pr(B_{i_1,k_1\ldots,i_l,k_l}),
\end{align*}
where 
\[
B_{i_1,k_1\ldots,i_l,k_l}:=\{X_{i_1}\geq 2^{k_1}td_{i_1},\ldots,X_{i_l}\geq 2^{k_l}td_{i_l}\}.
\]

Define a positive integer $l$ by 
\[
\frac{p}r< l\leq 2\frac{p}r \quad \mbox{ and }\quad l=2^{M} \mbox{ for some positive integer } M.
\]
Then $\sigma_{r,X}(p) \leq \sigma_{r,X}(rl)\leq \sigma_{r,X}(2p)\leq 2C_5\sigma_{r,X}(p)$. 
Since for any nonnegative r.v.\ $Z$ we have $(\Ex Z^{p/r})^{r/p}\leq (\Ex Z^l)^{1/l}$,  
it is enough to show that 
\begin{equation}
\label{eq:aim}
m(l)
\leq
\left(\frac{Cr\sigma_{r,X}(rl)}{t}\right)^{rl} 
\quad \mbox{ for } t\geq C_4r\max\left\{1,\log\left(\frac{d}{\sigma_{r,X}(rl)}\right)\right\},
\end{equation}
where
\[
m(l):=\sum_{k_1,\ldots,k_l=0}^{\infty}\sum_{i_1,\ldots,i_l=1}^n 2^{(k_1+\ldots+k_l)r}d_{i_1}^r\ldots d_{i_l}^r
\Pr(B_{i_1,k_1,\ldots,i_l,k_l}).
\]

We divide the sum in $m(l)$ into several parts.
Define sets 
\[
I_{0}:=\left\{(i_1,k_1,\ldots,i_l,k_l)\colon\ \Pr(B_{i_1,k_1,\ldots,i_l,k_l})> e^{-rl}\right\},
\]
and for $j=1,2,\ldots$,
\[
I_{j}:=\left\{(i_1,k_1,\ldots,i_l,k_l)\colon\ \Pr(B_{i_1,k_1,\ldots,i_l,k_l})\in (e^{-rl2^{j}},e^{-rl2^{j-1}}] \right\}.
\]
Then $m(l)=\sum_{j\geq 0}m_j(l)$, where
\[
m_{j}(l)
:= \sum_{(i_1,k_1,\ldots,i_l,k_l)\in I_j} 2^{(k_1+\ldots+k_l)r}d_{i_1}^r\ldots d_{i_l}^r\Pr(B_{i_1,k_1\ldots,i_l,k_l}).
\]

To estimate $m_{0}(l)$ define for $1\leq s\leq l$,
\[
P_sI_0:=\{(i_1,k_1,\ldots,i_s,k_s)\colon (i_1,k_1,\ldots,i_l,k_l)\in I_0 \mbox{ for some }
i_{s+1},\ldots,k_l\}.
\]
We have (since $t$ is assumed to be large)
\[
\Pr(B_{i_1,k_1,\ldots,i_s,k_s})\leq \Pr(B_{i_1,k_1})\leq \exp(2-2^{k_1-1}t/e)\leq e^{-1}.
\]
Thus for $s=1,\ldots,l-1$,  
\begin{multline*}
\sum_{(i_1, k_1,\ldots,i_{s+1}, k_{s+1})\in P_{s+1}I_{0}}
2^{(k_1+\ldots+k_{s+1})r}d_{i_1}^r\ldots d_{i_{s+1}}^r\Pr(B_{i_1,k_1,\ldots,i_{s+1},k_{s+1}})
\\
\leq
\sum_{(i_1,k_1,\ldots,i_s,k_s)\in P_{s}I_{0}}2^{(k_1+\ldots+k_{s})r}d_{i_1}^r\cdots d_{i_{s}}^r
F(i_1,k_1,\ldots,i_s,k_s),
\end{multline*}
where
\begin{align*}
F(i_1&,k_1,\ldots,i_s,k_s):=
\sum_{i=1}^n\sum_{k=0}^{\infty}
2^{kr}d_i^r\Pr(B_{i_1,k_1,\ldots,i_s,k_s}\cap\{X_i\geq 2^ktd_i\})
\\
&\leq
\sum_{i=1}^n\Ex 2t^{-r}|X_{i}|^r\ind_{B_{i_1,k_1,\ldots,i_s,k_s}\cap\{X_i\geq td_i\}}
\\
&\leq
2t^{-r}C_6^r
\Pr(B_{i_1,k_1,\ldots,i_s,k_s})\left(r^r\sigma_{r,X}^r(-\log\Pr(B_{i_1, k_1,\ldots,i_s,k_s}))+(dt)^re^{-t/C_7}\right),
\end{align*}
where the last inequality follows by \eqref{eq:cond1r}. Note that for $(i_1,k_1,\ldots,i_s,k_s)\in P_sI_0$ we have
$\Pr(B_{i_1,k_1,\ldots,i_s,k_s})> e^{-rl}$. Moreover, by our assumptions on $t$ 
(if  $C_4$ is sufficiently large with respect to $C_7$),
\[
(dt)^re^{-t/C_7}\leq t^re^{-t/(2C_7)}d^re^{-t/(2C_7)}\leq r^r\sigma_{r,X}^r(rl).
\]
Therefore
\begin{multline*}
\sum_{(i_1,k_1,\ldots,i_{s+1},k_{s+1})\in P_{s+1}I_{0}}
2^{(k_1+\ldots+k_{s+1})r}d_{i_1}^r\ldots d_{i_{s+1}}^r\Pr(B_{i_1,k_1,\ldots,i_{s+1},k_{s+1}})
\\
\leq 4t^{-r}(C_6r\sigma_{r,X}(rl))^r
\sum_{(i_1,k_1,\ldots,i_s,k_{s})\in P_{s}I_{0}}
2^{(k_1+\ldots+k_{s})r}d_{i_1}^r\ldots d_{i_s}^r\Pr(B_{i_1,k_1,\ldots,i_s,k_{s}}).
\end{multline*}

By induction we get
\begin{align*}
m_{0}(l)&
=\sum_{(i_1,k_1,\ldots,i_l,k_l)\in I_{0}}2^{(k_1+\ldots+k_{l})r}d_{i_1}^r\cdots d_{i_l}^r\Pr(B_{i_1,k_1,\ldots,i_l,k_l})
\\
&\leq
\left(\frac{{ 4}C_6r\sigma_{r,X}(rl)}{t}\right)^{r(l-1)}
\sum_{(i_1,k_1)\in P_1I_0}2^{k_1r}d_{i_1}^r\Pr(B_{i_1,k_1}).
\end{align*}
We have
\begin{align*}
\sum_{(i_1,k_1)\in P_1I_0}2^{k_1r}d_{i_1}^r\Pr(B_{i_1,k_1}) & \leq 
\sum_{i_1=1}^nd_{i_1}^r\sum_{k_1=0}^\infty 2^{k_1r}e^{2-2^{k_1-1}t/e}
\\ &
\leq \sum_{i_1=1}^n d_{i_1}^r2e^{2-t/{ (}2e{)}}\leq \left(\frac{Cr\sigma_{r,X}(rl)}{t}\right)^r,
\end{align*}
where the last two inequalities follow from the assumptions on $t$. Thus
\[
m_0(l)\leq \left(\frac{Cr\sigma_{r,X}(rl)}{t}\right)^{rl} .
\]

Now we estimate $m_j(l)$ for $j>0$. 
Fix $j>0$ and define a positive integer $\rho_1$ by 
\[
r2^{\rho_1-1}< \frac{t}{C_9}\leq r2^{\rho_1}.
\]
For all $(i_1,k_1,\ldots,i_l,k_l)\in I_j$ define a function 
$f_{i_1,k_1,\ldots,i_l,k_l}\colon \{1,\ldots,\ell\}\rightarrow \{0,1,\ldots\}$ 
by 
\[
f_{i_1,k_1,\ldots,i_l,k_l}(s):=
\left\{
\begin{array}{ll}
0 &\mbox{ if } \frac{\Pr(B_{i_1,k_1,\ldots,i_s,k_s})}{\Pr(B_{i_1,k_1,\ldots,i_{s-1},k_{s-1}})}>e^{-r},
\\
\rho &\mbox{ if } 
e^{-r2^{\rho}}< \frac{\Pr(B_{i_1,k_1,\ldots,i_s,k_s})}{\Pr(B_{i_1,k_1,\ldots,i_{s-1},k_{s-1}})}\leq e^{-r2^{\rho-1}},
\ \rho\geq 1.
\end{array}
\right.
\]
Note that for every $(i_1,k_1,\ldots,i_l,k_l) \in  I_j$ one has 
\[
1= \Pr(B_{\emptyset})\geq \Pr(B_{i_1,k_1}) \geq \Pr(B_{i_1,k_1,i_2,k_2})\geq \ldots 
\geq \Pr(B_{i_1,k_1,\ldots,i_l,k_l}) > \exp(-rl2^{j}).
\]

Denote 
\[
{\cal F}_j:=\left\{f_{i_1,k_1,\ldots,i_l,k_l}\colon\ (i_1,k_1,\ldots,i_l,k_l)\in I_j\right\}.
\]
Then for $f=f_{i_1,k_1,\ldots,i_l,k_l}\in {\cal F}_j$ and  $\rho\geq 1$ 
one has
\begin{align*}
\exp(-r2^{j}l)< \Pr(B_{i_1,k_1,\ldots,i_l,k_l}) 
&= \prod_{s=1}^{\ell}  \frac{\Pr(B_{i_1,k_1,\ldots,i_s,k_s})}{\Pr(B_{i_1,k_1,\ldots,i_{s-1},k_{s-1}})}  
\\
&\leq \exp(-r2^{\rho-1}|\{s\colon\ f(s)\geq \rho\}|).
\end{align*}
Hence for every $\rho\geq 1$ one has
\begin{equation}
\label{est_l_r}
|\{s\colon\ f(s)\geq \rho\}| \leq \min\{2^{j+1-\rho}l,l\}=:l_\rho.
\end{equation}
In particular $f$ takes values in $\{0,1,\ldots,j+1+\log_2 l\}$.
Clearly, $\sum_{\rho\geq 1}l_\rho= (j+2)l$ and $l_{\rho-1}/l_\rho\leq 2$, 
so by Lemma \ref{combf} 
\[
|{\cal F}_j| \leq \prod_{\rho=1}^{j+1+\log_2 l}\left(\frac{e l_{\rho-1}}{l_\rho} \right)^{l_\rho}\leq e^{2(j+2)l}.
\]

Now fix $f\in {\cal F}_j$ and define 
\[
I_j(f):=\{(i_1,k_1,\ldots,i_l,k_l)\colon\ f_{i_1,k_1,\ldots,i_l,k_l}=f\}
\]
and for $s\leq l$,
\[
I_{j,s}(f):=\{(i_1,k_1,\ldots,i_s,k_s)\colon\ f_{i_1,k_1,\ldots,i_l,k_l}=f\mbox{ for some }
i_{s+1},k_{s+1}\ldots,i_l,k_l\}.
\]

Recall that for $s\geq 1$, $\Pr(B_{i_1,k_1,\ldots,i_s,k_s})\leq e^{-1}$. 
Moreover for $s\leq l$,
\begin{align*}
\sigma_X(-\log\Pr(B_{i_1,k_1,\ldots,i_s,k_s}))
&\leq \sigma_X(-\log\Pr(B_{i_1,k_1,\ldots,i_l,k_l}))
\leq \sigma_X(rl2^j)
\\
&\leq C_52^{j}\sigma_X(rl). 
\end{align*}
Hence estimate \eqref{eq:cond2r} applied with $u=r2^{f(s+1)}$ implies for $1\leq s\leq l-1$,
\begin{multline*}
\sum_{(i_1,k_1,\ldots,i_{s+1},k_{s+1})\in I_{j,s+1}(f)}2^{(k_1+\ldots+k_{s+1})r}d_{i_1}^{r}\ldots d_{i_{s+1}}^{r}
\Pr(B_{i_1,k_1,\ldots,i_{s+1},k_{s+1}})
\\
\leq g(f(s+1))\sum_{(i_1,k_1,\ldots,i_{s},k_{s})\in I_{j,s}(f)}2^{(k_1+\ldots+k_{s})r} d_{i_1}^{r}\ldots d_{i_{s}}^{r} 
\Pr(B_{i_1,k_1,\ldots,i_{s},k_{s}}),
\end{multline*}
where
\[
g(\rho):=
\left\{
\begin{array}{ll}
(C_8C_5 r)^r t^{-r}2^{jr}\sigma_{r,X}(rl)^r &\mbox{ for } \rho=0,
\\
(C_8C_5r)^r t^{-r}2^{r(\rho+j)}\sigma_{r,X}(rl)^r\exp(-r2^{\rho-1})&\mbox{ for } 1\leq \rho< \rho_1,
\\
(C_8C_5r)^r t^{-r}2^{r\rho}(2^{rj}\sigma_{r, X}(rl)^r+d^r)\exp(-r2^{\rho-1})&\mbox{ for }\rho \geq \rho_1.
\end{array}
\right.
\]

Suppose that $(i_1,k_1)\in I_1(f)$ and $f(1)=\rho$. Then
\[
\exp(-r2^{\rho})\leq \Pr(X_{i_1}\geq 2^{k_1}td_{i_1})\leq \exp(2-2^{k_1-1}t/e),
\] 
hence $2^{k_1}t\leq er2^{\rho+2}$. W.l.o.g.\ $C_9>4e$, therefore $\rho\geq \rho_1$.
Moreover, $2^{rk_1}\leq (4er)^r 2^{r\rho}t^{-r}$, hence
\[
\sum_{(i_1,k_1)\in I_{j,1}(f)}2^{rk_1}d_{i_1}^{r}\Pr(B_{i_1,k_1})
\leq d^r (8er)^r t^{-r}  2^{r\rho}\exp(-r2^{\rho-1})
\leq g(\rho)=g(f(1)),
\]
since w.l.o.g. $C_8C_5\geq 8e$.
Thus an easy induction shows that
\begin{align*}
m_j(f) &:=  \sum_{(i_1,\ldots,k_{l})\in I_{j}(f)}  2^{(k_1+\ldots+k_{l})r} d_{i_1}^r\ldots d_{i_l}^r 
\Pr(B_{i_1,k_1,\ldots,i_{l},k_{l}}) 
\\
& \leq \prod_{s=1}^lg(f(s))=\prod_{\rho=0}^{\infty}g(\rho)^{n_\rho},
\end{align*}
where $n_\rho:=|f^{-1}(\rho)|$.

Observe that 
\[
e^{-r2^{j-1}l}\geq \Pr(B_{i_1,k_1,\ldots,i_l,k_l})
=\prod_{s=1}^{l}\frac{\Pr(B_{i_1,k_1,\ldots,i_s,k_s})}{\Pr(B_{i_1,k_1,\ldots,i_{s-1},k_{s-1}})}
\geq e^{-lr}\prod_{s\colon f(s)\geq 1}e^{-r2^{f(s)}}.
\]
Therefore
\[
r\sum_{\rho=1}^{\infty}n_\rho2^{\rho-1}=\frac{r}{2}\sum_{s\colon f(s)\geq 1}2^{f(s)}\geq 
\frac{r}{2}l(2^{j-1}-1).
\]
Moreover
\[
\sum_{\rho\geq 1} \rho n_\rho\leq (j+1)l+\sum_{\rho\geq j+2}\rho l_\rho= (2j+4)l.
\]
Thus
\[
\prod_{\rho=0}^{\infty}g(\rho)^{n_\rho}
\leq 
\left(\frac{C_8C_5r 2^{j}\sigma_{r,X}(rl)}{t}\right)^{rl} 2^{rl(2j+4)}
\left(1+\frac{d^r}{\sigma_{r,X}(rl)^r}\right)^m\exp\left(-\frac{rl}{2}(2^{j-1}-1)\right),
\]
where $m=\sum_{\rho\geq \rho_1}n_\rho\leq l_{\rho_1}\leq 2^{j+1-\rho_1}l$.
By the assumption on $t$ we have $1+d^r/\sigma_{r,X}(rl)^r\leq 2\exp(t/C_4)\leq \exp(r2^{\rho_1-4})$ if $C_4$ 
is large enough (with respect to $C_9$).
Hence 
\[
m_j(l)\leq |{\cal F}_j| \left(\frac{\sqrt{e}C_8C_5 2^{(3j+4)}r\sigma_{r,X}(rl)}{t}\right)^{rl}\exp(-rl2^{j-3}).
\]

We get
\[
m(l)= \sum_{j=0}^{\infty} m_j(l)\leq  \left(\frac{Cr\sigma_{r,X}(rl)}{t}\right)^{rl}+
\sum_{j= 1}^\infty\left(\frac{C2^{5j}r\sigma_{r,X}(rl)}{t}\right)^{rl}\exp(-rl2^{j-3}).
\]
To finish the proof of \eqref{eq:aim}, note that 
\begin{align*}
& \sum_{j=1}^\infty \left(2^{5j}\right)^{rl}\exp(-rl2^{j-3}) \leq C^{rl} \sum_{j=1}^\infty \exp(-rl2^{j-4}) 
\leq C^{rl}. \qedhere
\end{align*} 
\end{proof}

\begin{proof}[Proof of Theorem \ref{thm:Paourlr}]
Since $(\Ex\|X\|_r^p)^{1/p}\leq C_5p\Ex\|X\|_r$, we may assume that
$p\geq r$. Let $d_i$ and $d$ be as in Theorem \ref{thm:cutPaourr}. Then

\begin{equation*}
d=\| (\Ex X_i^2)^{1/2} \|_r \le { 2C_5 \| (\Ex |X_i|) \|_r\le 2C_5\Ex\|X\|_r}.
\end{equation*}
Set
\[
\tilde{p}:=\inf\{q\geq p\colon\ \sigma_{r,X}(q)\geq d\}. 
\]
Theorem \ref{thm:cutPaourr} applied with $\tilde{p}$ instead of $p$ and $t=0$ yields
\begin{align*}
(\Ex\|X\|_r^p)^{1/p}&\leq (\Ex\|X\|_r^{\tilde{p}})^{1/\tilde{p}}\leq C_3r\sigma_{r,X}(\tilde{p})
=C_3r\max\{d,\sigma_{r,X}(p)\}
\\
&\leq Cr(\Ex\|X\|_r+\sigma_{r,X}(p)). \qedhere
\end{align*}
\end{proof}

\end{document}